\documentclass[reqno, 12pt]{amsart}
\usepackage{amsfonts,epsfig}
\usepackage{latexsym}
\usepackage{amssymb}
\usepackage{amsmath}
\usepackage{amsthm}
\usepackage{graphics}
\usepackage{hyperref}
\def\C{{\mathbb C}}

\def\Z{{\mathbb Z}}
\def\Zp{{\mathbb Z}_p}

\def\ord{\operatorname{ord}}

\def\Fq{{{\mathbb F}_q}}

\newtheorem{cor}{Corollary}

\newtheorem{thm}{Theorem}

\theoremstyle{definition}
\newtheorem{defn}{Definition}

\theoremstyle{remark}
\newtheorem{rem}{Remark}        
\newtheorem{rems}{Remarks}      
\newtheorem{example}{Example}
\newtheorem{examples}{Examples}

\topmargin -0.3in
\headsep 0.3in
\oddsidemargin 0in
\evensidemargin 0in
\textwidth 6.5in
\textheight 9in


\begin{document}
\title[A construction of ${\mathfrak v}$-adic modular forms]
{A construction of ${\mathfrak v}$-adic modular forms}
\author{David Goss}
\address{Department of Mathematics\\The Ohio State University\\231
W.\ $18^{\rm th}$ Ave.\\Columbus, Ohio 43210}

\email{dmgoss@gmail.com}

\date{June 16, 2013}

\begin{abstract}
The classical theory of $p$-adic (elliptic) modular forms arose in the 1970's from the work of
 J.-P.\ Serre \cite{se1} who took $p$-adic limits of the $q$-expansions of these forms. It was soon expanded
by N.\ Katz \cite{ka1} with a more functorial approach.  Since then the theory has grown
in a variety of directions. In the late 1970's, the theory of modular forms
associated to Drinfeld modules was born in analogy with elliptic modular forms
\cite{go1}, \cite{go2}. 
The associated expansions at $\infty$ are quite complicated and no obvious limits at
finite primes ${\mathfrak v}$
were apparent. Recently, however, there has been progress in the $\mathfrak v$-adic 
theory, \cite{vi1}.
Also recently, A.\ Petrov \cite{pe1}, building on previous work of \cite{lo1}, showed  that there
is an intermediate expansion at $\infty$ called the ``$A$-expansion,'' and he
constructed families of cusp forms with such expansions. It is our purpose in this
note to show that Petrov's results also lead to interesting ${\mathfrak v}$-adic cusp
forms \`a la Serre. 
Moreover the existence of these forms allows us to readily conclude a mysterious decomposition
of the associated Hecke action.
\end{abstract}

\maketitle

\section{Introduction}\label{intro} 
Let ${\mathcal G}_{2k}(z)$ be the classical elliptic modular Eisenstein series with $q$-expansion
$\sum c_nq^n$. It is well-known that the non-constant coefficients are of the form
\begin{equation}\label{intro1}
\sum_{d\mid n}d^{2k-1}\,.
\end{equation}
Let $p$ be a fixed prime with {\it weight space} ${\mathbb S}_p=\varprojlim_j \Z/(p-1)p^j$. 
Let $s_p\in {\mathbb S}_p$ and let $k_i$ be a collection of positive integers converging to
$s_p$. If $d\mid n$ is prime to $p$, then $d^{2k_i-1}\to d^{2s_p-1}$. On the other hand, 
if $p\mid d$ then the above powers of $d$ converge to $0$ $p$-adically. Thus the Eisenstein
series very strongly suggest the existence of a good $p$-adic theory of modular forms.

In \cite{pe1}, A.\ Petrov gives a powerful construction of {\it cuspidal} eigenforms which looks remarkably
similar to the construction of Eisenstein series (see Remark \ref{Gexpn12} below). We exploit
this analogy here to construct examples of $\mathfrak v$-adic modular forms, for 
finite primes $\mathfrak v$ of $\Fq[\theta]$. We further show how $\mathfrak v$-adic continuity
allows one to decompose the Hecke action on Petrov's forms.

It is my pleasure to thank Aleks Petrov and Federico Pellarin for their
kind advice and help with this paper.

\section{Review of the results of L\'opez and Petrov}\label{review}
In this section we briefly review the basic set-up of modular forms in finite characteristic and the
results of B.\ L\'opez \cite{lo1}, and A.\ Petrov \cite{pe1}, whose exposition we follow 
(outside of a few small
changes of notation which will be made clear below).

Let $q=p^{m_0}$ and set $A:=\Fq[\theta]$ (N.B.: in \cite{pe1}, the author  uses ``$T$'' instead of ``$\theta$'').
Put $K:=\Fq(\theta)$ and $K_\infty:=\Fq((1/\theta))$. So $K_\infty$ is complete with respect to the
absolute value $\vert ?\vert_\infty$ at $\infty$, and we let $\C_\infty$ be the completion of
a fixed algebraic closure $\bar K_\infty$ of $K_\infty$ equipped with the canonical 
extension of $\vert ?\vert_\infty$.

Let $\Omega$ be the Drinfeld upper half-plane; $\Omega$ is a connected $1$-dimensional rigid
analytic space. The set of $\C_\infty$ points of $\Omega$ is $\C_\infty \setminus K_\infty$. Let $z$ be
one such geometric point. Then, as in \cite{go1}, we set
\begin{equation}\label{imdist}
\vert z\vert_i:=\inf_{x\in K}\{\vert z-x\vert_\infty\}\,.
\end{equation}
If $\nu$ is in the value group of $\C_\infty$, then the set $\Omega_\nu:=
\{z\in \Omega\mid \vert z\vert_i\geq
\nu\}$ is an admissible open subset of $\Omega$. 

Let $\Gamma={\rm GL}_2(A)$ and let $k$ be a positive integer. Let $m$ be an integer with
$0\leq m<q-1$. In analogy with the classical 
${\rm SL}_2(\Z)$-theory, one can readily define the notion of modular forms of weight $k$ and
type $m$ associated to $\Gamma$ (see \cite{go1}, \cite{go2}, \cite{ge1}). 
The space of such forms is denoted
$M_{k,m}=M_{k,m}(\Gamma)$ and is finite dimensional. The subspaces of cusp forms and double-cusp forms
are denoted $S_{k,m}=S_{k,m}(\Gamma)$ and $S^2_{k,m}=S^2_{k,m}(\Gamma)$.

Let $C$ be the Carlitz module and $\tilde \pi$ its period;  the lattice $\Lambda_C$ associated 
to  $C$ is then $A\tilde \pi$. Let $e_C(z)$ be the exponential of $C$ and put 
\begin{equation}\label{u1}
u:=u(z)=1/e_C({\tilde \pi}z)\,.
\end{equation}
 (N.B.: in \cite{pe1}, the author uses $t(z)$ where we have used $u(z)$.) The function $u$ is regular on $\Omega$.
For $a\in A_+$ of degree $d$, we set
\begin{equation}\label{u2}
u_a:=u(az)\,.
\end{equation}
It is very easy to see that $u_a=u^{q^d}+\{{\rm higher\, terms\, in}\, u\}$.

Let $f(z)\in M_{k,m}$. Then $f$ has a unique expansion $f(z)=\sum_{i\geq 0} a_i u(z)^i$
converging to $f$ for $z\in \Omega_\nu$ with $\nu$ sufficiently large. 
In fact, as the space $\Omega$ is 
rigid-analytically connected, the above $u$-expansion uniquely determines the form $f(z)$.

Let $\Lambda\subset \C_\infty$ be an arbitrary $\Fq$-lattice; that is $\Lambda$ is a discrete (finite intersection
with each ball around the origin), $\Fq$ submodule of $\C_\infty$. For instance, $\Lambda$ could be
$\Lambda_C$ or the $g$-torsion points, $g\in A$, inside $\C_\infty$ of the Carlitz module. To such a lattice 
$\Lambda$ one associates the exponential function 
\begin{equation}\label{expfunc}
e_\Lambda (z)=z\prod_{0\neq \lambda\in \Lambda}(1-z/\lambda)\,,
\end{equation}
which is easily seen to be an entire $\Fq$-linear function in $z$. As such, the derivative in $z$ 
of $e_\Lambda(z)$ is identically $1$.  

We set
\begin{equation}\label{tfunc}
t_\Lambda(z):=e_\Lambda^\prime(s)/e_\Lambda(z)=1/e_\Lambda(z)=\sum_{\lambda\in 
\Lambda}1/(z+\lambda)\,,
\end{equation}
and, for a positive integer $k$,
\begin{equation}\label{sfunc}
S_{k,\Lambda}(z):=\sum_{\lambda\in \Lambda} 1/(z+\lambda)^k\,.
\end{equation}
As in \S 3 of \cite{ge1}, there is a monic polynomial, $G_{k,\Lambda}(X)$, of degree $k$, such that
$$G_{k,\Lambda}(t_\Lambda(z))=S_{k,\Lambda}(z)\,.$$

\begin{defn}\label{gn}
We set $G_n(z):=G_{n,\Lambda_C}(z)$.
\end{defn}
It is easy to see that $G_n(z)$ has coefficients in $K$.

\begin{defn}\label{Gexpn}
Let $f(z)\in M_{k,m}$ as above. We say that $f(z)$ has an {\it $A$-expansion} if there exists a positive
integer $n$ and coefficients $c_0,\, c_a,$ $a\in A_+$, such that 
\begin{equation}\label{Gexpn1}
f(z)=c_0(f)+\sum_{a\in A_+} c_a G_n(u_a)\,.
\end{equation}\end{defn}

\begin{rems}\label{Gexpn2}
a. If such an expansion exists for a given $n$, then, the argument of L\'opez \cite{lo1} shows that it is unique.\\
b If $f$ is a Hecke eigenform, then Petrov \cite{pe1} establishes that $n$ is uniquely determined by $f$.\\
c.  It is suspected, but not yet known, that all such expansions (\ref{Gexpn1}) are, in fact, uniquely
determined by $f$ for arbitrary forms $f$.\\
d. The form of the polynomials $G_n(X)$ we use is the correct one and the reader should simply ignore
the normalization used in \cite{pe1}.
\end{rems}

\begin{examples}\label{Gexpn3}
 The first class of examples of such expansions are the 
Eisenstein series: Let $k\equiv 0\pmod{q-1}$ be a
positive integer and set
\begin{equation}\label{Gexpn4}
{\mathcal G}_k(z):=\sum (az+b)^{-k}
\end{equation}
where, as usual, we sum over all nonzero $(a,b)\in A\times A$. As in \cite{ge1}, we see
\begin{equation}\label{Gexpn5}
-{\mathcal G}_k(z)/\tilde{\pi}^k=\sum_{a\in A_+} (\tilde \pi a)^{-k}+\sum_{a\in A_+} G_k(u_a)\,.
\end{equation}
\end{examples}

The second class of examples involve cusp forms. There are two fundamental cusp forms in the theory; 
$\Delta$ and $h$. Here $\Delta$ is defined in 
analogy with its classical counterpart and $h$ is a certain $q-1$-st root of $\Delta$. 
In \cite{lo1} L\'opez establishes the following $A$-expansions for these forms, thereby establishing that
$A$-expansions {\it also} work for cusp forms.

\begin{examples}\label{Gexpn6}
We have
\begin{equation}\label{Gexpn7}
\Delta=\sum_{a\in A_+} a^{q(q-1)}G_{q-1}(u_a)=\sum_{a\in A_+} a^{q(q-1)}u_a^{q-1}\,,\end{equation}
and
\begin{equation}\label{Gexpn8}
h=\sum_{a\in A_+} a^qG_1(u_a)=\sum_{a\in A_+} a^qu_a\,.\end{equation}
\end{examples}

In \cite{pe1}, Petrov builds on the above examples and constructs families of cusp forms 
with $A$-expansions which we now recall.
Let $\ord_p(j)$ be the $p$-adic valuation of an integer $j$.

\begin{thm}\label{Gexpn9}
Let $k,n$ be two positive integers such that $k-2n$ is a positive multiple of $q-1$ and 
$n\leq p^{\ord_p(k-n)}$. Then 
\begin{equation}\label{Gexpn10}
f_{k,n}:=\sum_{a\in A_+} a^{k-n} G_n(u_a)
\end{equation}
is an element of $S_{k,m}$ with $n\equiv m\pmod{q-1}$.\end{thm}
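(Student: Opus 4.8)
The plan is to show that the series $f_{k,n}$ converges to a modular form of the correct weight and type, and then to verify the invariance under $\Gamma = \mathrm{GL}_2(A)$.

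First I would establish convergence: since $u_a = u^{q^d} + \{\text{higher terms}\}$ for $a$ of degree $d$, and $G_n$ is monic of degree $n$, each term $a^{k-n}G_n(u_a)$ is divisible (in the $u$-adic sense) by $u^{nq^d}$. Because there are only finitely many $a\in A_+$ of each degree and $nq^d\to\infty$, the sum converges coefficient-by-coefficient in $u$ to a well-defined power series, and one checks uniform convergence on each $\Omega_\nu$ for $\nu$ large using the standard estimates on $|u(z)|$ (as in Gekeler \cite{ge1}). This shows $f_{k,n}$ is at least a rigid-analytic function on a neighborhood of $\infty$, indeed holomorphic on all of $\Omega$ since each $G_n(u_a)$ is.

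Next comes the heart of the matter: the transformation law $f_{k,n}\bigl(\tfrac{az+b}{cz+d}\bigr) = (cz+d)^k\,(\text{scalar}_m)\,f_{k,n}(z)$ under $\gamma=\left(\begin{smallmatrix}a&b\\c&d\end{smallmatrix}\right)\in\Gamma$. The natural approach is to rewrite $f_{k,n}$ intrinsically in terms of lattice sums. Using $G_{n,\Lambda}(t_\Lambda(z)) = S_{n,\Lambda}(z) = \sum_{\lambda\in\Lambda}(z+\lambda)^{-n}$, and the homogeneity $e_{c\Lambda}(cz) = c\,e_\Lambda(z)$, one expresses $\sum_{a\in A_+}a^{k-n}G_n(u_a)$ as a double sum over $A_+\times\Lambda_C$, i.e. essentially $\sum_{a,\,\lambda}a^{k-n}(\tilde\pi(az+\lambda))^{-n}$-type terms, which can be repackaged as a sum over a $\Gamma$-stable set of pairs in $A^2$. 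Here I would follow the template of the Eisenstein-series computation \eqref{Gexpn5}: the weight-$k$ homogeneity of $(cz+d)^{-1}$-factors produces the automorphy factor $(cz+d)^k$, while the $\Fq^\times$-action on $a$ contributes the type-$m$ character with $n\equiv m\pmod{q-1}$. The subtlety is that $f_{k,n}$ is \emph{not} simply an Eisenstein series — the inner polynomial $G_n$ rather than a pure power $t^n$ means the rearrangement only works after the exact combinatorial identity relating $\sum a^{k-n}G_n(u_a)$ to a symmetric lattice sum, which is precisely where the hypotheses $k-2n\in(q-1)\Z_{>0}$ and $n\le p^{\ord_p(k-n)}$ must enter.

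The main obstacle, and where I expect the real work to lie, is exactly that last arithmetic constraint. The condition $n\le p^{\ord_p(k-n)}$ is what guarantees that the "naive" double sum — which a priori is only $\Gamma$-invariant up to lower-weight error terms coming from the non-leading coefficients of $G_n$ — actually collapses to give a genuine weight-$k$ form; combinatorially this is the statement that certain binomial-type coefficients $\binom{k-n}{?}$ vanish mod $p$ in the needed range, killing the obstruction terms. I would isolate this as a lemma on the $p$-divisibility of the relevant coefficients, prove it by Lucas' theorem on binomial coefficients mod $p$, and then feed it back into the transformation computation. Finally, to get \emph{cuspidality} ($f_{k,n}\in S_{k,m}$) I would check that the $u$-expansion has zero constant term: indeed every $G_n(u_a)$ with $n\ge 1$ vanishes at $u=0$, so $c_0(f_{k,n})=0$ and the form vanishes at the unique cusp, completing the proof.
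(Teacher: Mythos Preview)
The paper does not prove this theorem at all: it is stated as a result \emph{recalled} from Petrov's paper \cite{pe1} (see the sentence immediately preceding the theorem, ``In \cite{pe1}, Petrov builds on the above examples and constructs families of cusp forms with $A$-expansions which we now recall''). So there is no proof here against which to compare your proposal; the theorem functions purely as a citation.

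That said, your sketch has a structural gap worth flagging. The strategy of ``repackaging $\sum_{a\in A_+}a^{k-n}G_n(u_a)$ as a sum over a $\Gamma$-stable set of pairs in $A^2$'' is exactly what makes the Eisenstein-series computation \eqref{Gexpn5} go through, but it cannot work here in the same way: the index set $A_+$ of \emph{monic} polynomials is not stable under the natural $\Gamma$-action, and indeed the resulting form is cuspidal rather than Eisenstein, so no rearrangement will exhibit $f_{k,n}$ as a sum over a single $\Gamma$-orbit. In Petrov's actual argument the modularity is not obtained by directly verifying the transformation law via lattice symmetry; rather, the hypotheses $(q-1)\mid k-2n$ and $n\le p^{\ord_p(k-n)}$ are used (via Lucas-type congruences, as you correctly anticipate) to control the $u$-expansion coefficients well enough to identify $f_{k,n}$ with an element of the already-known finite-dimensional space $M_{k,m}$. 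Your convergence and cuspidality paragraphs are fine, but the middle step needs to be replaced by this indirect identification rather than a direct invariance computation.
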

It is easy to see that  the $u$-expansion of $f_{k,n}$ has coefficients in $K$.

\begin{rem}\label{Gexpn11}
As Petrov points out in Remark 1.4 of \cite{pe1}, the condition $n\leq p^{\ord_p(k-n)}$ is equivalent to having the $p$-adic
expansion of $k$ and $n$ agree up to the $\lfloor \log_p(n)\rfloor$-th digit.
\end{rem}

Petrov's construction has a remarkable analog classically, but in the realm of Eisenstein
series, {\it not} cusp forms, as the following remark makes clear.

\begin{rem}\label{Gexpn12}
Let ${\mathcal G}_{2k} (z):=\sum_{(0,0)\neq (m,n)} (mz+n)^{-2k}$ be classical Eisenstein series and
set ${\mathcal E}_{2k}(z):={\mathcal G}_{2k}(z)/2\zeta(2k)$ where $\zeta(z)$ is the Riemann zeta
function. It is known that ${\mathcal E}_{2k}$ has the {\it Lambert expansion} 
\begin{equation}\label{Gexpn13}
{\mathcal E}_{2k}(z)=1+\frac{2}{\zeta(1-2k)}\sum_{n=1}^\infty n^{2k-1}\frac{q^n}{1-q^n}\,.
\end{equation}
If we put $q_n:=q^n$ and $G(x):=x/(1-x)$, we can rewrite Equation \ref{Gexpn13}
as 
\begin{equation}\label{Gexpn14}
{\mathcal E}_{2k}(z)=1+\frac{2}{\zeta(1-2k)}\sum_{n=1}^\infty n^{2k-1} G(q_n)\,,
\end{equation}
and the analogy with Equation \ref{Gexpn10} is clear. Thus, in a sense, Petrov's construction is both
analogous to, and  ``orthogonal to'' (i.e., lying in the space of cusp forms), 
the classical construction of Eisenstein series.
\end{rem}

\section{${\mathfrak v}$-adic modular forms in the sense of Serre}\label{vserre}
In this section we interpolate the cusp forms of Theorem \ref{Gexpn9} at finite primes of $A$. 
Let ${\mathfrak v}\in {\rm Spec}(A)$ be a fixed finite prime of degree $d$ with 
completions $A_{\mathfrak v},\ K_{\mathfrak v}$
respectively of $A$ and $K$. Let $p_\mathfrak v$ be the monic generator of $\mathfrak v$,  and let
$A_{{\mathfrak v},+}\subset A_{\mathfrak v}^\ast$ be those monic elements {\it also} prime to ${\mathfrak v}$. 

\begin{defn}\label{vad1}
We define the {\it ${\mathfrak v}$-adic weight space} $\mathbb S_{\mathfrak v}$ by
\begin{equation}\label{vad2}
{\mathbb S}_{\mathfrak v}:=\varprojlim_t \Z/((q^d-1)p^t)=\Z/(q^d-1)\times \Zp\,.
\end{equation}\end{defn}

If $a\in A_{{\mathfrak v},+}$ then put $a=a_0a_1$ where $a_0\in A_v^\ast$ is the $q^d-1$-st
root of unity with $a_0\equiv a \bmod \mathfrak v$ and $a_1\equiv 1 \bmod \mathfrak v$.
Thus, if $s_{\mathfrak v}=(x_{\mathfrak v},y_{\mathfrak v})\in {\mathbb S}_{\mathfrak v}$, 
then one defines $a^{s_{\mathfrak v}}:=a_0^{x_\mathfrak v}a_1^{y_\mathfrak v}$ in complete
analogy with classical theory; the function $s_{\mathfrak v}\mapsto a^{s_{\mathfrak v}}$ is
readily seen to be continuous from ${\mathbb S}_{\mathfrak v}$ to $A_{\mathfrak v}^\ast$. 

Let $s_{\mathfrak v}\in {\mathbb S}_\mathfrak v$.

\begin{defn}\label{vad2.1}
We set
\begin{equation}\label{vad2.2}
\hat{f}_{s_\mathfrak v,n}:=\sum_{a\in A_{{\mathfrak v},+}} a^{s_\mathfrak v} G_n (u_a)\,,
\end{equation} which is readily seen to converge to an element in $A_{\mathfrak v}[[u]]\otimes K$.\end{defn}

Let $f(u)=\sum a_nu^n\in A_{\mathfrak v}[[u]]\otimes K$ be an arbitrary power series. We set
\begin{equation}\label{vad3}
\ord_{\mathfrak v}(f):=\inf_n \{\ord_{\mathfrak v}(a_n)\}\,.
\end{equation}

\begin{defn}\label{vad4}
We say that a power series $f(u)\in A_{\mathfrak v}[[u]]\otimes K$ is a {\it ${\mathfrak v}$-adic modular form in the sense of Serre}
if there exists a sequence $f_i\in M_{k_i,m}$ such $\ord_{\mathfrak v}(f-f_i)$ tends to $\infty$ with $i$.
\end{defn}

In other words, the power series $f(u)$ is a ${\mathfrak v}$-adic limit of the $u$-expansions of true modular forms.

Now let $f_{k,n}$ be as constructed in Theorem \ref{Gexpn9} and put $\alpha:=k-n$. As $k-2n$ is
a positive multiple of $q-1$, we find that $\alpha\equiv n\pmod{q-1}$. Moreover,
by Remark \ref{Gexpn12} we see that $\alpha$ is divisible by a high power of $p$. With this in mind,
we write $n$ $q^d$-adically as
\begin{equation}\label{vad5}
n=\sum_{e=0}^t n_eq^{de}\quad n_t\neq 0\,.
\end{equation}

\begin{defn}\label{vad6}
We let ${\mathbb S}_{\mathfrak v}(n)\subseteq \mathbb S_{\mathfrak v}$ be the open subset consisting of those $s_{\mathfrak v}=(x_{\mathfrak v},y_{\mathfrak v})$
such that $x_{\mathfrak v}\equiv n \pmod{q-1}$ and $y_{\mathfrak v}\equiv 0\pmod{q^{d(t+1)}}$.
\end{defn}

\begin{thm}\label{vad7}
Let $s_\mathfrak v\in {\mathbb S}_v(n)$. Then $\hat{f}_{s_{\mathfrak v},n}$ is a $\mathfrak v$-adic modular form
in the sense of Serre.\end{thm}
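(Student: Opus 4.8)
The plan is to realize $\hat{f}_{s_{\mathfrak v},n}$ as a $\mathfrak v$-adic limit of Petrov's forms $f_{k_i,n}$ from Theorem \ref{Gexpn9}. First I would choose a sequence of positive integers $k_i$ with $k_i - 2n$ a positive multiple of $q-1$, satisfying the congruence constraint $n \leq p^{\ord_p(k_i - n)}$, and such that $\alpha_i := k_i - n$ converges $\mathfrak v$-adically (equivalently, $q^d$-adically on its $\Zp$-component) to the element $s_{\mathfrak v} \in {\mathbb S}_{\mathfrak v}(n)$ under the exponentiation pairing $a \mapsto a^{s_{\mathfrak v}}$. The condition $x_{\mathfrak v} \equiv n \pmod{q-1}$ guarantees we can keep $\alpha_i \equiv n \pmod{q-1}$ (which is automatic from $k_i - 2n \equiv 0$), and the condition $y_{\mathfrak v} \equiv 0 \pmod{q^{d(t+1)}}$ is exactly what is needed so that the ``tail'' contributions behave well — see the next paragraph. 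By Remark \ref{Gexpn12}/\ref{Gexpn11}, forcing $\ord_p(\alpha_i) \to \infty$ is compatible with the digit-agreement condition between $k_i$ and $n$, so such $k_i$ exist; this is a matter of the Chinese Remainder Theorem and the openness of ${\mathbb S}_{\mathfrak v}(n)$.

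Next I would compare the $u$-expansions of $f_{k_i,n} = \sum_{a \in A_+} a^{\alpha_i} G_n(u_a)$ and of $\hat{f}_{s_{\mathfrak v},n} = \sum_{a \in A_{{\mathfrak v},+}} a^{s_{\mathfrak v}} G_n(u_a)$ coefficient by coefficient in $u$. The difference splits into two parts: the sum over monic $a$ \emph{divisible} by $p_{\mathfrak v}$ (present in $f_{k_i,n}$ but not in $\hat f_{s_{\mathfrak v},n}$), and the sum over $a \in A_{{\mathfrak v},+}$ where one compares $a^{\alpha_i}$ with $a^{s_{\mathfrak v}}$. For the first part, if $p_{\mathfrak v} \mid a$ then $\ord_{\mathfrak v}(a^{\alpha_i}) \geq \alpha_i \to \infty$, so those terms contribute something with $\ord_{\mathfrak v}$ tending to infinity — here one uses that $u_a = u^{q^{\deg a}} + \cdots$ has $u$-expansion in $A[[u]]$ and that $G_n$ has coefficients in $A$ (Definition \ref{gn}), so no denominators are introduced and the valuation estimate is clean. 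For the second part, one needs $a^{\alpha_i} \to a^{s_{\mathfrak v}}$ in $A_{\mathfrak v}$ \emph{uniformly} in $a \in A_{{\mathfrak v},+}$: writing $a = a_0 a_1$ with $a_1 \equiv 1 \bmod {\mathfrak v}$, we have $a^{\alpha_i} = a_0^{\alpha_i} a_1^{\alpha_i}$, and since $a_0^{q^d - 1} = 1$ while $a_1 \in 1 + {\mathfrak v}A_{\mathfrak v}$, convergence of $\alpha_i$ in ${\mathbb S}_{\mathfrak v}$ gives the uniform estimate; the rate depends only on $\alpha_i$, not on $a$.

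The key technical point tying it together is a \emph{uniform} convergence statement: because for each fixed power $u^j$ only finitely many $a$ (those with $q^{\deg a} \leq j$) contribute to the coefficient of $u^j$, the $\mathfrak v$-adic estimate on $\ord_{\mathfrak v}(f_{k_i,n} - \hat f_{s_{\mathfrak v},n})$ — taken as the infimum over all coefficients of $u$ — follows from the two pointwise-in-$a$ estimates above, \emph{provided} the bound for the ``$p_{\mathfrak v} \mid a$'' terms does not degrade as $\deg a \to \infty$. It does not, since $\ord_{\mathfrak v}(a^{\alpha_i}) \geq \alpha_i$ regardless of $\deg a$. I expect the main obstacle to be precisely this uniformity, i.e., ruling out a diagonal escape where the error in high-$u$-degree coefficients (coming from $a$ of large degree) fails to go to zero even though each individual term does; the role of the constraint $y_{\mathfrak v} \equiv 0 \pmod{q^{d(t+1)}}$ defining ${\mathbb S}_{\mathfrak v}(n)$ is to control the interaction between the $q^d$-adic digits of $n$ (display \eqref{vad5}) and those of the exponent, ensuring the comparison $a^{\alpha_i} \leftrightarrow a^{s_{\mathfrak v}}$ is valid on the nose for all $a$ simultaneously. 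Once the uniform estimate is in hand, $\ord_{\mathfrak v}(f_{k_i,n} - \hat f_{s_{\mathfrak v},n}) \to \infty$, and since each $f_{k_i,n} \in S_{k_i,m} \subset M_{k_i,m}$, Definition \ref{vad4} is satisfied, completing the proof.
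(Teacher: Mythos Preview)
Your approach is essentially the paper's: choose positive integers $\alpha_i=m_i\in{\mathbb S}_{\mathfrak v}(n)$ converging to $s_{\mathfrak v}$, set $k_i=m_i+n$, check that each $f_{k_i,n}$ is a genuine Petrov cusp form, and show the $u$-expansions converge $\mathfrak v$-adically by splitting over $p_{\mathfrak v}\mid a$ versus $p_{\mathfrak v}\nmid a$. One correction on the role of the constraint $y_{\mathfrak v}\equiv 0\pmod{q^{d(t+1)}}$: it is not there to make $a^{\alpha_i}\to a^{s_{\mathfrak v}}$ uniform (that holds automatically by continuity), but to ensure the approximating integers $\alpha_i$ may be taken divisible by $q^{d(t+1)}>n$, which is precisely Petrov's hypothesis $n\le p^{\ord_p(k_i-n)}$ --- without it the $f_{k_i,n}$ are not known to be modular forms and the argument collapses.
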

\begin{proof}
Let $m_i\in {\mathbb S}_{\mathfrak v}(n)$, $i=1,2,\ldots$ be an increasing sequence of positive integer 
converging to $s_v$ in $\mathbb S_{\mathfrak v}$. Put $k_i=m_i+n$ so that $k_i-2n$ is divisible by $(q-1)$
and, for $i\gg 1$, is also positive. Note that if $a\in \mathfrak v$ then $a^{m_i}\to 0$ in $A_\mathfrak v$.
It is now clear that, for $i\gg 1$, the forms $f_{k_i,n}$ of Theorem \ref{Gexpn9} converge
$\mathfrak v$-adically to $\hat{f}_{s_{\mathfrak v},n}$.\end{proof}

\begin{defn}\label{vad7.1}
We call $s_\mathfrak v+n\in {\mathbb S}_v$ the {\it weight} of $\hat{f}_{s_{\mathfrak v},n}$. It's
{\it type} is given by the class of $n$ modulo $q-1$. 
\end{defn}

\begin{rem}\label{vad8}
We do not know if those $\hat{f}_{s_{\mathfrak v},n}$, for $s_\mathfrak v$ not in ${\mathbb S}_{\mathfrak v}(n)$,
are modular forms in the sense of Serre. As of this writing, there is no reason to necessarily believe
that they are.
\end{rem}

\section{Hecke operators}\label{hecke}
We show here how the existence of $\mathfrak v$-adic interpolations of Petrov's forms $f_{k,n}$
has striking implications 
for the action of the Hecke operators on {\it any} fixed form $f_{k_0,n}$. 

Let $g$ be a monic prime of $A$ of degree $d$ and let $\Lambda_g\subset \C_\infty$ be the $\Fq$ submodule of
$g$-division points. Let $G_{k,\Lambda_g}(X)$ be constructed as in section \ref{review}. 
Let $A_d\subset A$ be the $\Fq$ submodule of polynomials of degree $<d$, 
and let $f\in M_{k,m}$ be an arbitrary element. 

\begin{defn}\label{hecke1}
We set
\begin{equation}\label{hecke1.1}
\hat T_gf(z):=\sum_{\beta\in A_d}f\left(\frac{a+\beta}{g}\right)\,,\end{equation} and
\begin{equation}\label{hecke2} 
T_gf(z):=g^kf(gz)+\hat T_gf(z)=g^kf(gz)+\sum_{\beta\in A_d}f\left(\frac{a+\beta}{g}\right)\,.
\end{equation}\end{defn}
One sees, as expected, that $T_g f$ also belongs to $M_{k,m}$ etc. If $f$ has the $u$-expansion
$f=\sum_{n=0}^\infty a_n u^n$, then one has
\begin{equation}\label{hecke3}
T_gf=g^k\sum_{n=0}^\infty a_nu_g^n +\sum_{k=0}^\infty a_k G_{k,\Lambda_g}(gu)\,.
\end{equation}
As a consequence of Equation \ref{hecke3}, we will view $T_g$ and $\hat T_g$ as operators on power series
without referring to the original additive expansion exactly as in classical theory.

\begin{rem}\label{hecke3.1}
It is of fundamental importance that Petrov establishes that, for any $k$ and $n$, the cusp
form $f_{k,n}$ of Theorem \ref{Gexpn9} is a Hecke eigenform for any $g$ with 
associated eigenvalue $g^n$.
\end{rem}

Fix $\mathfrak v$ as before and now also fix $f:=f_{k_0,n}=\sum a_ju^j$ where $k_0$ and $n$ satisfy the hypotheses
of Theorem \ref{Gexpn9}. We decompose $f$ as $f_{0,\mathfrak v}+f_{1,\mathfrak v}=f_0+f_1$ where 
\begin{equation}\label{hecke4} f_0:=\sum_{a\in A_{v,+}} a^{k_0-n} G_n(u_a)\,,\end{equation}
and 
\begin{equation}\label{hecke5} f_1:= \sum_{a\in {\mathfrak v}\cap A_+} a^{k_0-n}G_n(u_a)
\,.\end{equation}
Set $f_i=\sum a^{(i)}_j u^j$ for $i=0,1$. From the definition of $f$ one easily deduces
that $f_1=p_\mathfrak v ^{k_0-n}
\sum a_j u_{p_\mathfrak v}^j$.

\begin{example}\label{hecke5.1}
Let $\mathfrak v=(\theta)$ and $h=\sum_{a\in A_+}a^qu_a$ as before. Thus
$h_{0,\mathfrak v}=\sum_{a\in A_+,\  a(0)\neq 0}a^qu_a$ and $h_{1,\mathfrak v}=\sum_{a\in A_+,\  a(0)=0} a^qu_a$.
\end{example}

Let $g\not\in \mathfrak v$ be a monic irreducible. 
\begin{thm}\label{hecke6}
With the above notation, both $f_0$ and $f_1$ are separately eigenforms for $T_g$ with
eigenvalue $g^n$.
\end{thm}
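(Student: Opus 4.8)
The plan is to leverage $\mathfrak{v}$-adic continuity together with the fact (Remark~\ref{hecke3.1}) that each $f_{k,n}$ is a $T_g$-eigenform with eigenvalue $g^n$. First I would observe that $f_1 = p_{\mathfrak v}^{k_0-n}\sum a_j u_{p_{\mathfrak v}}^j$, i.e.\ $f_1$ is obtained from $f = f_{k_0,n}$ by the substitution $u \mapsto u_{p_{\mathfrak v}}$ scaled by $p_{\mathfrak v}^{k_0-n}$. Since $g \notin \mathfrak v$, the Hecke operator $T_g$ (acting on $u$-expansions via Equation~\ref{hecke3}) commutes with this "$p_{\mathfrak v}$-substitution" operator: both $u_g$ and $u_{p_{\mathfrak v}}$ are built from $e_C$, and $T_g$ acts through $g$-isogenies while the $p_{\mathfrak v}$-substitution acts through the $p_{\mathfrak v}$-isogeny, and these isogenies commute on the Carlitz module. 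Hence $T_g f_1 = p_{\mathfrak v}^{k_0-n}\bigl(\text{$u_{p_{\mathfrak v}}$-substitute of } T_g f\bigr) = p_{\mathfrak v}^{k_0-n}\bigl(\text{$u_{p_{\mathfrak v}}$-substitute of } g^n f\bigr) = g^n f_1$. Then automatically $T_g f_0 = T_g f - T_g f_1 = g^n f - g^n f_1 = g^n f_0$.

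Alternatively — and this is the route I would actually write up, since it is cleaner and matches the paper's emphasis — I would argue directly via $\mathfrak v$-adic approximation. By Theorem~\ref{vad7}, $f_0 = \hat f_{s_{\mathfrak v},n}$ for a suitable $s_{\mathfrak v} \in \mathbb{S}_{\mathfrak v}(n)$ (namely the one with integer representative $k_0-n$ in the relevant sense, lying in the appropriate residue class), and $f_0$ is a $\mathfrak v$-adic limit of the true modular forms $f_{k_i,n}$. Each $f_{k_i,n}$ satisfies $T_g f_{k_i,n} = g^n f_{k_i,n}$ by Remark~\ref{hecke3.1}. The key point is that $T_g$, viewed as an operator on $u$-power-series via Equation~\ref{hecke3}, is $\mathfrak v$-adically continuous: its effect on each coefficient is given by a fixed $A$-linear (indeed polynomial-with-$A$-coefficients) formula involving $g^k$ and $G_{k,\Lambda_g}$, and since $g$ is prime to $\mathfrak v$ the relevant quantities $g^{k_i}$ vary $\mathfrak v$-adically continuously in $k_i$. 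Passing to the limit, $T_g f_0 = g^{s_{\mathfrak v}} \cdot(\text{limit})$; but one checks that the eigenvalue $g^{k_i \cdot 0 + n} = g^n$ is constant along the sequence, so the limit eigenvalue is simply $g^n$, giving $T_g f_0 = g^n f_0$. Again $T_g f_1 = g^n f_1$ follows by subtraction.

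A subtle point I should address carefully: the operator $T_g$ a priori involves $g^k$, which is not $\mathfrak v$-adically continuous in $k$ if $g \in \mathfrak v$ — this is precisely why the hypothesis $g \notin \mathfrak v$ is imposed. With $g \notin \mathfrak v$, $g^{k_i} \to g^{s}$ for the appropriate $s \in \mathbb{S}_{\mathfrak v}$, so the full operator $T_g = g^k(\text{$u_g$-substitution}) + \hat T_g$ does converge coefficientwise on the relevant power series. I would also need to confirm that the partial sums defining $\hat f_{s_{\mathfrak v},n}$ interact correctly with $T_g$ — i.e.\ that one may interchange the Hecke action with the sum over $a \in A_{\mathfrak v,+}$; this follows from $\mathfrak v$-adic convergence of the $u$-expansion together with the continuity just discussed, but it is the place where a genuine (if routine) estimate is needed.

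The main obstacle I expect is not conceptual but bookkeeping: making precise the sense in which $f_0 = \hat f_{s_{\mathfrak v},n}$ for an explicit $s_{\mathfrak v} \in \mathbb{S}_{\mathfrak v}(n)$ — one must check that $k_0 - n$, reduced into the weight space, lands in the open set $\mathbb{S}_{\mathfrak v}(n)$ of Definition~\ref{vad6}, which uses the hypotheses on $k_0$ and $n$ from Theorem~\ref{Gexpn9} (in particular that $\alpha = k_0-n$ is divisible by a high power of $p$, hence $y$-coordinate highly divisible). Once that identification is in place, the eigenform property transfers immediately from Petrov's theorem by continuity, and the decomposition $f = f_0 + f_1$ with both pieces eigenforms of the same eigenvalue $g^n$ is forced.
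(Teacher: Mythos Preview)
Your second route---$\mathfrak v$-adic approximation of $f_0$ by genuine Petrov forms $f_{k_i,n}$, each already a $T_g$-eigenform with eigenvalue $g^n$, then passing to the limit using continuity of $T_g$ (valid precisely because $g\notin\mathfrak v$), and finally getting $f_1$ by subtraction---is exactly the paper's proof. One simplification: you do not need to force $k_0-n$ into the open set $\mathbb{S}_{\mathfrak v}(n)$ of Definition~\ref{vad6}; that set was introduced only to guarantee that a \emph{general} $s_{\mathfrak v}$ is approximable by integers satisfying Petrov's hypotheses. Here $k_0$ already satisfies them, so any sequence $k_i\to k_0$ in $\mathbb{S}_{\mathfrak v}$ with $k_i\to\infty$ archimedeanly (e.g.\ $k_i=k_0+(q^d-1)q^{di}$) will have $\ord_p(k_i-n)=\ord_p(k_0-n)$ for $i$ large, hence $f_{k_i,n}$ is a true Petrov cusp form and converges to $f_0$.

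Your first route---showing $T_gf_1=g^nf_1$ directly from $f_1=p_{\mathfrak v}^{k_0-n}\,f\vert_{u\mapsto u_{p_{\mathfrak v}}}$ and the commutation of $T_g$ with the $p_{\mathfrak v}$-substitution (which holds because $p_{\mathfrak v}$ is a unit modulo $g$, so the $\beta$-sum in $\hat T_g$ is unchanged)---is a correct alternative the paper does not pursue, though Remark~\ref{hecke7}(a) notes that such a direct argument is available.
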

\begin{proof} Let $k_i$ be an infinite sequence of positive integers converging to $k_0$ in the
$\mathfrak v$-adic weight space ${\mathbb S}_v$. Then, as in the proof of Theorem \ref{vad7}, 
the expansions of $f_{k_i,n}$ converge to that of $f_0$. Note further that {\it each} such cusp form
is an eigenform for $T_g$ with eigenvalue $g^n$. Note further that, by definition, the action of $T_g$ on
$f_{k_i,n}$ converges to the action of $T_g$ on $f_0$. Thus, $T_gf_0=g^nf_0$. Writing 
$f_1=f-f_0$ then gives the statement for $f_1$. \end{proof}

\begin{rems}\label{hecke7}
a. Petrov has pointed out that the techniques of \cite{pe1} allow for a direct proof of 
Theorem \ref{hecke6}.\\
b. Petrov also points out that the decomposition $f=f_0+f_1$ is actually a decomposition of
$\mathfrak v$-adic modular forms. Indeed, one sees that $f_1$ is a modular from for
$\Gamma_0({\mathfrak v})$ and so is also
 a $\mathfrak v$-adic form by Theorem 6.2 of \cite{vi1}.
\end{rems}

Suppose now that $g=p_\mathfrak v$.
\begin{thm}\label{hecke8}
We have 
\begin{equation}\label{hecke9}
\hat T_g f_0=g^n f_0\end{equation}
\end{thm}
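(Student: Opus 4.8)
The idea is to exploit directly the fact, recalled in Remark~\ref{hecke3.1}, that $f:=f_{k_0,n}$ is a $T_g$-eigenform with eigenvalue $g^n$ for \emph{every} monic prime $g$ --- in particular for $g=p_{\mathfrak v}$, the case not covered by Theorem~\ref{hecke6}. Write $g:=p_{\mathfrak v}$ and, as in Definition~\ref{hecke1} and Equation~\ref{hecke3}, $T_gf=g^{k_0}f(gz)+\hat T_gf$. The first point is that the ``degeneracy'' term $g^{k_0}f(gz)$ is exactly $g^nf_1$: the identity $f_1=p_{\mathfrak v}^{k_0-n}\sum_j a_ju_{p_{\mathfrak v}}^j$ obtained earlier says precisely that $f_1(z)=g^{k_0-n}f(gz)$, so $g^{k_0}f(gz)=g^n\bigl(g^{k_0-n}f(gz)\bigr)=g^nf_1$. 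Substituting into $g^nf=T_gf=g^{k_0}f(gz)+\hat T_gf$ gives $\hat T_gf=g^n(f-f_1)=g^nf_0$. Since $\hat T_g$ is linear and $f=f_0+f_1$, the theorem follows once $\hat T_gf_1=0$ is established, for then $\hat T_gf_0=\hat T_gf-\hat T_gf_1=g^nf_0$.

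To obtain $\hat T_gf_1=0$ I would use $f_1(z)=g^{k_0-n}f(gz)$ once more. Working with functions on $\Omega$,
\[
\hat T_gf_1(z)=\sum_{\beta\in A_d}f_1\!\left(\frac{z+\beta}{g}\right)=g^{k_0-n}\sum_{\beta\in A_d}f\!\left(g\cdot\frac{z+\beta}{g}\right)=g^{k_0-n}\sum_{\beta\in A_d}f(z+\beta).
\]
Now $f\in M_{k_0,m}$ is modular for $\Gamma={\rm GL}_2(A)$, hence invariant under the translations $z\mapsto z+\beta$, $\beta\in A$ (translation matrices carry trivial automorphy factor --- which is exactly why $f$ has a $u$-expansion at all), so the inner sum is $(\#A_d)\,f(z)=q^{\deg g}f(z)=q^df(z)$; and $q^d=0$ in $\C_\infty$ because $q$ is a power of the characteristic $p$. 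Hence $\hat T_gf_1=0$, completing the proof. Equivalently, in $u$-expansion language one is checking that $\hat T_g$ annihilates $\sum_j a_ju_g^j$, the expansion of $f(gz)$; no convergence subtlety arises, since this is a finite sum of translates of $f$.

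I expect the only genuine content to be this last vanishing. It is a characteristic-$p$ effect: writing $V_gf(z):=f(gz)$, the composite $\hat T_g\circ V_g$ is multiplication by $q^{\deg g}\equiv 0$, in contrast with the classical relation $U_pV_p={\rm id}$; everything else is formal bookkeeping with $T_g=g^{k_0}V_g+\hat T_g$ and the eigenvalue relation for $f$. An alternative, more uniform route would parallel the proof of Theorem~\ref{hecke6}: choose integers $k_i\equiv k_0$ satisfying the hypotheses of Theorem~\ref{Gexpn9} with $k_i\to k_0$ in ${\mathbb S}_v$, so that $f_{k_i,n}\to f_0$ $\mathfrak v$-adically; since $g^{k_i}=p_{\mathfrak v}^{k_i}\to 0$, the term $g^{k_i}f_{k_i,n}(gz)$ in $g^nf_{k_i,n}=T_gf_{k_i,n}=g^{k_i}f_{k_i,n}(gz)+\hat T_gf_{k_i,n}$ vanishes in the limit, and continuity of $\hat T_g$ then gives $\hat T_gf_0=g^nf_0$. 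The first route is shorter and avoids controlling $\mathfrak v$-adic denominators, so that is the one I would write up.
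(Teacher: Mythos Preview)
Your primary argument is correct but takes a different route from the paper. The paper proves Theorem~\ref{hecke8} exactly via what you call the ``alternative, more uniform route'': it invokes the $\mathfrak v$-adic limit argument of Theorem~\ref{hecke6}, letting $k_i\to k_0$ in ${\mathbb S}_{\mathfrak v}$ so that $f_{k_i,n}\to f_0$, and then noting that in $T_gf_{k_i,n}=g^{k_i}f_{k_i,n}(gz)+\hat T_gf_{k_i,n}=g^nf_{k_i,n}$ the first term vanishes in the limit because $g^{k_i}=p_{\mathfrak v}^{k_i}\to 0$; continuity of $\hat T_g$ on $u$-expansions then yields $\hat T_gf_0=g^nf_0$.

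Your chosen route is more elementary and entirely $\mathfrak v$-free: from the eigenvalue relation for $f$ together with $f_1(z)=g^{k_0-n}f(gz)$ you obtain $\hat T_gf=g^nf_0$ directly, and the characteristic-$p$ identity $\hat T_g\circ V_g=q^{\deg g}\cdot{\rm id}=0$ (applied to $f$) disposes of $\hat T_gf_1$. This is a clean algebraic proof that never leaves weight $k_0$ and makes no appeal to $\mathfrak v$-adic interpolation. The paper's approach, by contrast, treats Theorems~\ref{hecke6} and~\ref{hecke8} in one stroke and keeps the $\mathfrak v$-adic continuity theme front and center --- which is, after all, the point of the note. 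Either is fine; your version is perhaps more self-contained, while theirs better explains \emph{why} one should expect such a decomposition to be Hecke-stable.
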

\begin{proof} Noting that $g^{k_i}$ tends to $0$ $\mathfrak v$-adically, the result follows as in the
proof of Theorem \ref{hecke6}.\end{proof}
\begin{cor}\label{hecke10}
We have 
\begin{equation}\label{hecke11} 
T_g f_1= g^nf_1-g^{k_0} \sum a_n^{(0)}u_g^n\,.
\end{equation}\end{cor}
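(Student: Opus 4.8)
The plan is to derive Equation \ref{hecke11} formally, without any new analysis, by combining the decomposition $f=f_0+f_1$, the fact (Remark \ref{hecke3.1}) that $f=f_{k_0,n}$ is a $T_g$-eigenform with eigenvalue $g^n$, and Theorem \ref{hecke8}. The only real task is to keep track of which part of the Hecke operator acts ``as an eigenvalue'' on $f_0$ and which part produces the correction term.

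First I would write out $T_gf_0$ using the power-series description of the Hecke operator in Equation \ref{hecke3}, taken with weight $k_0$: writing $f_0=\sum_j a_j^{(0)}u^j$, this gives
\[
T_gf_0=g^{k_0}\sum_j a_j^{(0)}u_g^j+\hat T_gf_0,
\]
the first summand being exactly the ``$g^{k_0}f_0(gz)$'' contribution expressed in the coordinate $u$, and $\hat T_gf_0=\sum_j a_j^{(0)}G_{j,\Lambda_g}(gu)$. Since $g=p_\mathfrak v$, Theorem \ref{hecke8} gives $\hat T_gf_0=g^nf_0$, hence
\[
T_gf_0=g^{k_0}\sum_j a_j^{(0)}u_g^j+g^nf_0.
\]

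Next, using linearity of $T_g$ on $A_\mathfrak v[[u]]\otimes K$ together with $f=f_0+f_1$ and $T_gf=g^nf$, I would subtract the last display from $T_gf=g^nf_0+g^nf_1$ to obtain
\[
T_gf_1=g^nf-g^nf_0-g^{k_0}\sum_j a_j^{(0)}u_g^j=g^nf_1-g^{k_0}\sum_j a_j^{(0)}u_g^j,
\]
which is precisely Equation \ref{hecke11}. Here $T_gf_1$ is being interpreted purely through Equation \ref{hecke3}, so no appeal to the modularity of $f_1$ itself is needed at this stage.

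Since every step is formal, there is no genuine obstacle; the one point that deserves care is the asymmetry between the two halves of $T_g$. For the fixed integer $k_0$ the factor $g^{k_0}=p_\mathfrak v^{k_0}$ is a nonzero element of $A$, so the ``raising'' contribution $g^{k_0}f_0(gz)$ genuinely survives, whereas the ``lowering'' contribution $\hat T_gf_0$ collapses to the eigen-term $g^nf_0$ — the latter being the content of Theorem \ref{hecke8}, whose proof in turn rests on $g^{k_i}\to 0$ $\mathfrak v$-adically along an interpolating sequence $k_i\to k_0$. It is exactly the failure of $g^{k_0}$ itself to vanish that is responsible for the extra term in \ref{hecke11}.
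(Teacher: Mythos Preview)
Your argument is correct and matches the paper's own proof essentially line for line: both compute $T_gf_0$ by splitting it as $g^{k_0}\sum a_j^{(0)}u_g^j+\hat T_gf_0$, invoke Theorem~\ref{hecke8} to replace $\hat T_gf_0$ by $g^nf_0$, and then subtract from $T_gf=g^nf_0+g^nf_1$ to isolate $T_gf_1$. Your added remark explaining why the $g^{k_0}$-term survives (since $k_0$ is a fixed integer, not a limit along which $g^{k_i}\to 0$) is a helpful gloss but not an additional ingredient.
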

\begin{proof}
We have $T_gf=T_gf_0+T_gf_1=g^n f=g^nf_0+g^nf_1$. On the other hand, $T_g f_0=g^{k_0}\sum a_n^{(0)}u_g^n
+\hat T_g f_0$ which, by Theorem \ref{hecke8} equals $g^{k_0}\sum a_n^{(0)} u_g^n+g^n f_0$. The result
follows directly.\end{proof}

One can further decompose $f_0$ and $f_1$ by using a prime $\mathfrak v_1\neq \mathfrak v$
etc.

\end{document}